\newtheorem{theorem}{Theorem}[section]
\newtheorem{corollary}[theorem]{Corollary}
\newtheorem{lemma}[theorem]{Lemma}
\newtheorem{assumption}[theorem]{Assumption}
\tikzset{
main/.style={circle, minimum size = 5mm, thick, draw =black!80, node distance = 10mm},
connect/.style={-latex, thick},
box/.style={rectangle, draw=black!100}
}
\newcommand*\diff{\mathop{}\!\mathrm{d}}
\newcommand{\E}{\mathbb{E}}
\newcommand\norm[1]{\left\lVert#1\right\rVert}
\begin{document}

\begin{frontmatter}

\title{{\large {Minimax Rate for Optimal Transport Regression Between Distributions}}}

\runtitle{Minimax Rate for Optimal Transport Regression Between Distributions}

\begin{aug}
\author{\fnms{Laya} \snm{Ghodrati}\ead[label=e1]{laya.ghodrati@epfl.ch}} \and
\author{\fnms{Victor M.} \snm{Panaretos}\ead[label=e2]{victor.panaretos@epfl.ch}}


\runauthor{L. Ghodrati \& V.M. Panaretos}

\affiliation{Ecole Polytechnique F\'ed\'erale de Lausanne}

\address{Institut de Math\'ematiques\\
Ecole Polytechnique F\'ed\'erale de Lausanne\\
\printead{e1}, \printead*{e2}}

\end{aug}

\begin{abstract}
    Distribution-on-distribution regression considers the problem of formulating and estimating a regression relationship where both covariate and response are probability distributions. The optimal transport distributional regression model postulates that the conditional Fr\'echet mean of the response distribution is linked to the covariate distribution via an optimal transport map. We establish the minimax rate of estimation of such a regression function, by deriving a lower-bound that matches the convergence rate attained by the Fr\'echet least squares estimator.
\end{abstract}

\begin{keyword}[class=AMS]
\kwd[Primary ]{62M, 15A99}
\kwd[; secondary ]{62M15, 60G17}
\end{keyword}

\begin{keyword}
\kwd{functional regression}
\kwd{random measure}
\kwd{optimal transport}
\kwd{Wasserstein metric}
\end{keyword}

\end{frontmatter}

{{ \footnotesize
\tableofcontents
}}

\section{Introduction}

Distribution-on-distribution regression considers the the problem of formulating and estimating a regression relationship where both covariate and response are probability distributions. In that sense, it can be seen through the lens of Functional Data Analysis (FDA, \citet{hsing2015theoretical}), and specifically as a special version of the function-on-function regression problem  \citep{morris2015functional,hall2007methodology}. The case of distributions is peculiar because they are bound to satisfy positivity and integral constraints, which are inherently non-linear. Therefore, functional regression methods hinging on the structure of separable Hilbert spaces cannot be directly applied. One needs to either apply a linearising transformation to the regressor/response pairs in order to return to a Hilbert space setting, or to endorse the non-linearity and work in an appropriate native space. See \citet{petersen-review} for a review. In the latter approach, optimal transportation (and the associated Wasserstein spaces) have been seen to offer a canonical geometry for the functional data analysis of distributions \citep{panaretos2019statistical,panaretos2020invitation}. The main two methods so far pursued in this context are those of lifting to the tangent space, postulating a linear regression function between the covariate/response images under the log map \citep{chen2021wasserstein,zhang2022wasserstein}, or to work directly with optimal transport maps, postulating that the response and covariate distributions are related via monotone rearrangement \citep{ghodrati2021distribution}. The two approaches are not directly comparable, though the latter appears to be more simply interpretable while avoiding ill-posedness issues. In particular, under minimal regularity, \cite{ghodrati2021distribution}  establish an upper bound of $N^{-1/3}$ for the rate of estimation of the regression function (with respect to the number $N$ of covariate/response pairs), irrespective of refined regularity properties. The purpose of this note is to establish that $N^{-1/3}$ is indeed the minimax optimal rate, by obtaining a lower bound of same order.

To this aim, we first review the distributional regression model introduced in \cite{ghodrati2021distribution} in more detail. Let $\{(\mu_i,\nu_i)\}_{i=1}^{N}$ be an independent collection of regressor/response pairs in $\mathcal{W}_2(\Omega)\times \mathcal{W}_2(\mathbb{R})$, where the domain $\Omega$ is a compact interval of $\mathbb{R}$. The regression model is
\begin{equation}{\label{model}}
   \nu_{i}=T_{\epsilon_i}\#(T_0\#\mu_i),  \quad  \{\mu_i,\nu_i\}_{i=1}^N,
\end{equation}
where $T_0:\Omega \to \mathbb{R}$ is an unknown optimal map and $\{T_{\epsilon_i}\}_{i=1}^{N}$ is a collection of independent and identically distributed random optimal maps satisfying $E\{T_{\epsilon_i}(x)\}=x$ almost everywhere on $\Omega$. These represent the ``noise" in the model. The regression task is to estimate the unknown $T_0$ from the observations $\{\mu_i,\nu_i\}_{i=1}^N$. 

The probability law induced on $\mathcal{W}_2(\Omega) \times \mathcal{W}_2(\mathbb{R})$ by model \eqref{model} is denoted by $P$. The marginal laws induced on the typical regressor $\mu$ and the typical response $\nu$ are $P_{M}$ and $P_N$, respectively. The (linear) average of $P_M$, i.e. $Q(A)=\int_{\mathcal{W}_2(\Omega)} \mu(A)\diff P_M(\mu)$ is denoted by $Q$. Note that all $\mu$ in the support of $P_M$ are dominated by the measure $Q$, i.e. $\mu \ll Q$ almost surely. Finally, the parameter set of optimal transport maps $\mathcal{T}$ is defined as:
$$\mathcal{T}:=\{T :\Omega \to \Omega: 0\leq T'(x) {<\infty} \text{ for } Q \text{-almost every } x \in \Omega \}.$$

\noindent In this context, the  \cite{ghodrati2021distribution} introduce and study the following Fr\'echet-least-squares estimator:
\begin{equation}{\label{functional}}
  \quad \quad \hat{T}_N:=\arg\min_{T \in \mathcal{T}}   M_N(T),\quad
  M_N(T):= \frac{1}{2N} \sum_{i=1}^N d^2_{\mathcal{W}}(T\# \mu_i,\nu_i).
\end{equation}

Under certain assumptions (see the next Section) they show the $L^2(Q)$ convergence rate of $\hat{T}_N$ to the true map $T_0$ to be $O(N^{-1/3})$. Since there is no ill-conditioning inherent in the setup of Model  \eqref{model}, one might have expected a rate of $O(N^{-1/2})$ when the measures are completely observed (as opposed to being sample from, or observed discretely with error), as is usually the case in functional data analysis. Our purpose is to show that $O(N^{-1/3})$ is indeed the ``right rate'' by establishing a link between Model  \eqref{model} and classical isotonic regression.

\section{Regularity Conditions}

\noindent We now review and (slightly relax) the assumptions made in \cite{ghodrati2021distribution}:
\begin{assumption}{\label{domains}}
The samples are $\{(\mu_i,\nu_i)\}_{i=1}^{N}$  are an i.i.d. collection of regressor/response pairs in $\mathcal{W}_2(\Omega)\times \mathcal{W}_2(\mathbb{R})$, where $\Omega$ is a closed interval of $\mathbb{R}$.
\end{assumption}

\begin{assumption}{\label{noise}}
The error maps $T_\epsilon:\Omega \to \mathbb{R}$ are i.i.d. non-decreasing random maps satisfying $\E(T_{\epsilon_i}(x))=x$ for almost every $x$ on $\Omega$.
\end{assumption}

\begin{assumption}{\label{assumpMaps}}
The regression map $T_0$ is a deterministic element of $\mathcal{T}$.
\end{assumption}

We remark that Assumption \eqref{domains} is weaker than the corresponding assumption in \cite{ghodrati2021distribution} -- namely, we do not require absolute continuity of the input measures. Assumption \eqref{noise} is also weaker relative to the corresponding assumption in \cite{ghodrati2021distribution}, as we do not restrict the range of $T_\epsilon$ to be a compact interval. 

By direct inspection it can be seen that the additional restrictions in \cite{ghodrati2021distribution} are \emph{not} necessary under complete observation of the covariate/response measures. They are made use of in \cite{ghodrati2021distribution} only when one observes the regression/response measures indirectly, e.g. via simple random sampling.

Here we are only concerned with the lower bound (with respect to $N$) when one observes the covariate/response measures completely, as an indicator of the minimax estimation rate intrinsic\footnote{If the covariate/response measures are observed indirectly, additional regularity is asserted on the covariate/response measures in order to be able to recover them. But such assumptions are extrinsic to the structure of the Model \eqref{model} itself.} to Model \ref{model}. In that context, there is no gap between the assumptions used to establish the upper bound $N^{-1/3}$ in \cite{ghodrati2021distribution} and the Assumptions \eqref{domains}, \eqref{noise}, and \eqref{assumpMaps} we use here to derive the lower bound for the convergence rate.

\section{Minimax Rate}
\noindent We now establish the minimax lower bound for the estimation of the map $T_0$. 
\begin{theorem}{\label{lower-bound}}
In the context of Model \ref{model} and under Assumptions \ref{domains}, \ref{noise} and \ref{assumpMaps}, it holds that:
$$R_{N,P}:=\inf_{\hat{T}_N} \sup_{T_0 \in \mathcal{T}} E\bigg\{\norm{\hat{T}_N-T_0}^2_{L^2(Q)}\bigg\}\geq N^{-1/3},$$
where the infimum is taken over all measurable functions of $\{(\mu_i,\nu_i)\}_{i=1}^{N}$ ranging in $\mathcal{T}$.
\end{theorem}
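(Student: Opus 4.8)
\emph{Proof strategy.} The plan is to reduce Model~\eqref{model}, under complete observation of the measures, to classical monotone (isotonic) regression, and then to invoke the standard $N^{-2/3}$ lower bound for the latter. Take $\Omega=[0,1]$ and let $Q$ be Lebesgue measure on $[0,1]$, so that $L^2(Q)=L^2[0,1]$ and $\mathcal{T}$ contains every non-decreasing Lipschitz self-map of $[0,1]$. Choose the covariate law $P_M$ to be the law of $\delta_X$ with $X\sim\mathrm{Unif}[0,1]$ --- its linear average is exactly $Q$ --- and the noise to be the random translation $T_\epsilon(x)=x+\epsilon$ with $\epsilon\sim N(0,1)$: this map is non-decreasing, satisfies $\E\{T_\epsilon(x)\}=x$, and is admissible under the relaxed Assumption~\eqref{noise}, for which $\mathrm{ran}(T_\epsilon)\subseteq\Omega$ is no longer demanded. (If one additionally insists, as in the remark following the definition of $Q$, that each covariate be dominated by $Q$, one may instead take $\mu$ uniform on $[X-h_N,X+h_N]\cap[0,1]$ with $h_N\asymp N^{-1/3}$; none of the estimates below change.) With these choices $\nu_i=T_{\epsilon_i}\#(T_0\#\delta_{X_i})=\delta_{T_0(X_i)+\epsilon_i}$, so that the sample $\{(\mu_i,\nu_i)\}_{i=1}^{N}$ carries exactly the information in $\{(X_i,Y_i)\}_{i=1}^{N}$ with $Y_i=T_0(X_i)+\epsilon_i$. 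Consequently $R_{N,P}$ is bounded below by the minimax risk, under squared $L^2[0,1]$ loss, of estimating a monotone regression function from $N$ i.i.d.\ Gaussian observations, and it suffices to show this is $\gtrsim N^{-2/3}$.

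For that I would apply Assouad's lemma over a hypercube of monotone perturbations. Fix the profile $\phi(t)=\sin(2\pi t)$ (so that $\phi(0)=\phi(1)=0$), a small constant $\gamma\le(4\pi)^{-1}$, and $m=m_N:=\lceil c_0 N^{1/3}\rceil$; partition $[0,1]$ into $I_j=[\tfrac{j-1}{m},\tfrac{j}{m})$ and, for $b\in\{0,1\}^{m}$, put
\[
T_b(x):=\tfrac{x}{2}+\tfrac14+\frac{\gamma}{m}\sum_{j=1}^{m} b_j\,\phi\!\big(m(x-\tfrac{j-1}{m})\big).
\]
The choice of $\gamma$ forces $T_b'=\tfrac12+\gamma\phi'(\,\cdot\,)\in[0,1]$ and $T_b([0,1])\subseteq[0,1]$, so $T_b\in\mathcal{T}$ for every $b$. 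Since the bumps have disjoint supports, $\|T_b-T_{b'}\|^2_{L^2[0,1]}=\gamma^2\|\phi\|_{L^2}^2\,m^{-3}\,\rho_H(b,b')$, where $\rho_H$ denotes Hamming distance; thus the family has the additive separation Assouad requires, with per-coordinate level $\tau^2:=\gamma^2\|\phi\|_{L^2}^2\,m^{-3}$.

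For $b,b'$ that differ in a single coordinate, the $N$-sample laws $P_b,P_{b'}$ differ only through the Gaussian means $T_b(X_i),T_{b'}(X_i)$, and $T_b-T_{b'}$ is supported on one block, so
\[
\mathrm{KL}(P_b\,\|\,P_{b'})=\frac N2\int_0^1\big(T_b-T_{b'}\big)^2\,dx=\frac{N\gamma^2\|\phi\|_{L^2}^2}{2m^3}\le\frac14
\]
as soon as $c_0$ is chosen large enough. Assouad's lemma with Pinsker's inequality then yields
\[
R_{N,P}\ \ge\ \inf_{\hat T_N}\max_{b\in\{0,1\}^{m}}\E_b\|\hat T_N-T_b\|^2_{L^2}\ \gtrsim\ m\,\tau^2\ \asymp\ m^{-2}\ \asymp\ N^{-2/3},
\]
that is, $R_{N,P}\gtrsim N^{-2/3}$ (the finitely many small $N$ absorbed into the constant); if one reads the risk in Theorem~\ref{lower-bound} without the square, the same cube gives the stated $N^{-1/3}$ bound via the metric form of Assouad. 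Either way the estimation rate in $L^2(Q)$ is pinned at $N^{-1/3}$, matching the Fr\'echet least-squares estimator of \cite{ghodrati2021distribution}. A Fano/Yang--Barron bound over an $\exp(cN^{1/3})$-packing of monotone functions would give the same conclusion, but the additive cube is the most transparent route.

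The only step carrying real content is the reduction in the first paragraph: one must recognise that, for completely observed measures, a point-mass covariate together with translation noise turns Model~\eqref{model} \emph{verbatim} into monotone regression, and then verify that this instance respects Assumptions~\eqref{domains}--\eqref{assumpMaps} --- and it is precisely the relaxations adopted in the present note (no compactness of $\mathrm{ran}(T_\epsilon)$, and, in the narrow-uniform variant, no absolute continuity of the covariates) that make the construction admissible. Everything past that point is the textbook Assouad argument for the $N^{-1/3}$ rate of isotonic regression and involves no surprises; the constants can be tracked explicitly but are immaterial here.
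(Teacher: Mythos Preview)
Your reduction is exactly the paper's: let $P_M$ be the law of $\delta_X$ with $X$ drawn from a density on $[0,1]$, take translation noise $T_\epsilon(x)=x+\epsilon$ with $\epsilon\sim N(0,\sigma^2)$, and observe that the data $\{(\mu_i,\nu_i)\}$ then carry precisely the information in the isotonic-regression sample $\{(X_i,Y_i)\}$ with $Y_i=T_0(X_i)+\epsilon_i$. You also correctly flag that the relaxed Assumptions~\eqref{domains}--\eqref{noise} are what make Dirac covariates and unbounded-range noise admissible; the paper makes the same point.

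Where you diverge is in the information-theoretic lower bound. The paper applies Fano's inequality via the Yang--Barron route: it bounds the KL-covering number of $\{P_T:T\in\mathcal{T}\}$ by the $L^2$-bracketing entropy of monotone functions, $\log N_{[]}(\epsilon)\lesssim\epsilon^{-1}$, and balances this against a packing of the same metric-entropy order to obtain $\delta\asymp N^{-1/3}$. Your argument instead builds an explicit Assouad hypercube of $m\asymp N^{1/3}$ localized sine bumps on the base map $x\mapsto x/2+1/4$, checks monotonicity and range directly, and reads off the per-coordinate separation $\tau^2\asymp m^{-3}$ with neighbour KL $\asymp N/m^3=O(1)$. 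Both routes are standard for the isotonic rate and yield the same conclusion; your construction is more self-contained (no appeal to bracketing-entropy results for monotone classes), while the paper's Yang--Barron calculation mirrors the entropy machinery used for the upper bound and thus makes the matching of rates structurally transparent. Your closing remark that Fano/Yang--Barron ``would give the same conclusion'' is precisely what the paper does. You also catch, correctly, that the theorem's displayed bound $N^{-1/3}$ for the \emph{squared} $L^2$ risk should read $N^{-2/3}$ (equivalently, $N^{-1/3}$ for the unsquared norm); the paper's own proof is consistent with the latter reading. One caveat: your parenthetical ``narrow-uniform'' variant with $\mu=\mathrm{Unif}[X-h_N,X+h_N]$ does not reduce cleanly to scalar regression (the response is then no longer a Dirac), so that side remark would need more work---but it is unnecessary, since the paper itself uses Dirac covariates.
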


\begin{corollary}
Under the same conditions, the Fr\'echet-least-squares estimator proposed in \cite{ghodrati2021distribution} attains the lower bound in Theorem \ref{lower-bound}, and consequently is minimax optimal.
\end{corollary}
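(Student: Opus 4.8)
The plan is to establish minimax optimality by sandwiching the minimax risk $R_{N,P}$ between two matching bounds of order $N^{-1/3}$. Theorem \ref{lower-bound} already supplies the lower bound $R_{N,P} \gtrsim N^{-1/3}$, so it suffices to prove the matching upper bound
$$\sup_{T_0 \in \mathcal{T}} \E\bigg\{\norm{\hat{T}_N - T_0}^2_{L^2(Q)}\bigg\} \lesssim N^{-1/3},$$
since then $\hat{T}_N$ attains the minimax rate. This upper bound is precisely the convergence result of \cite{ghodrati2021distribution}; the only substantive task is to verify that their argument persists under the relaxed Assumptions \ref{domains}, \ref{noise} and \ref{assumpMaps}.

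First I would recall the reduction underlying the upper bound. Representing measures on $\R$ by their quantile functions, the squared Wasserstein distance linearises, and under Model \ref{model} one obtains, for non-decreasing $T$,
$$d^2_{\mathcal{W}}(T\#\mu_i,\nu_i) = \int_\Omega \big(T(x) - T_{\epsilon_i}(T_0(x))\big)^2 \diff\mu_i(x).$$
Averaging over $i$ exhibits $M_N$ as an empirical least-squares criterion for fitting the monotone target $T_0$ from the noisy responses $T_{\epsilon_i}(T_0(\cdot))$, which are unbiased for $T_0$ by Assumption \ref{noise}; hence $T_0$ minimises the population criterion $M(T) = \tfrac12\E\{d^2_{\mathcal{W}}(T\#\mu,\nu)\}$. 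The constraint set $\mathcal{T}$ of non-decreasing maps is an isotonic shape-constrained class whose $\epsilon$-entropy on the compact domain $\Omega$ grows like $\epsilon^{-1}$; feeding this into the standard empirical-process M-estimation machinery (a basic inequality followed by a peeling argument controlling the modulus of continuity of the centred empirical process) yields the $N^{-1/3}$ rate, which is exactly the monotone-regression rate that motivated the lower bound.

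The main obstacle is confirming that neither relaxation changes the rate exponent. Dropping absolute continuity of the inputs (relative to \cite{ghodrati2021distribution}) is harmless: the quantile linearisation above holds for every $\mu \in \mathcal{W}_2(\Omega)$ and monotone $T$, and the target $L^2(Q)$ geometry is governed by the averaged input measure $Q$, which is unaffected by atoms in individual $\mu_i$. Dropping the compact-range restriction on $T_\epsilon$ is the delicate point, since the noise $T_{\epsilon_i}(T_0(\cdot)) - T_0(\cdot)$ need no longer be uniformly bounded; here I would replace the uniform bounds used in the maximal inequalities by second-moment control, which is available because $\nu_i \in \mathcal{W}_2(\R)$ ensures finite second moments of the responses, while the monotonicity of $T_\epsilon$ together with $\E\{T_\epsilon(x)\} = x$ on the compact $\Omega$ keeps the relevant increments of the empirical process at the same order. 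The rate exponent $1/3$ is therefore preserved.

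Combining the two directions gives $N^{-1/3} \lesssim R_{N,P} \le \sup_{T_0 \in \mathcal{T}} \E\{\norm{\hat{T}_N - T_0}^2_{L^2(Q)}\} \lesssim N^{-1/3}$, so the Fréchet-least-squares estimator attains the minimax lower bound of Theorem \ref{lower-bound} and is minimax optimal, as claimed.
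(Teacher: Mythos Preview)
Your proposal is correct and follows the same approach as the paper: the corollary is simply the combination of the lower bound from Theorem~\ref{lower-bound} with the $N^{-1/3}$ upper bound for $\hat T_N$ established in \cite{ghodrati2021distribution}, together with the observation that the relaxed Assumptions~\ref{domains}--\ref{assumpMaps} do not affect that upper bound under complete observation. The paper itself gives no separate proof of the corollary beyond asserting (in Section~2) that ``by direct inspection'' the extra restrictions in \cite{ghodrati2021distribution} are only needed for indirectly observed measures; your proposal merely spells out that inspection in more detail, so there is no substantive difference.
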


\begin{proof}[Proof of Theorem \ref{lower-bound}]
The idea will be to imbed the setting of isotonic regression within the setting of the current estimation problem.  
We will then use Fano's method (Theorem 15.2. and Lemma 15.5 from \cite{wainwright2019high})as restated in the Appendix for our purposes, following the usual path for establishing the isotonic rate.

First and without loss of generality, we assume that $\Omega = [0,1]$. Suppose $P_M$ is supported on the set of measures $S:=\{\delta_x \text{ s.t. } x\in [0,1]\}$, where $\delta_x$ is a point mass at $x\in[0,1]$. Suppose also that $\diff P_M(\delta_x)=\diff p(x)$, where $p$ is a distribution on $[0,1]$ with bounded density. Note that in this setting, we can see that the distribution $p$  is equal to $Q$ (defined in the first section). 

Further let $\sigma^2>0$ and suppose that given $x\in [0,1]$ the marginal distribution of the real-valued random variable $T_\epsilon(x)$ is centered Gaussian with variance $\sigma^2$, i.e.
$$T_\epsilon(x) \sim N(x,\sigma^2),\qquad \forall x\in [0,1].$$
To see that such family of random maps exists, take each random map to be $T_\epsilon(x)=I(x)+\sigma Z$ where $I(x)=x$ is the identity map and $Z\sim N(0,1)$ is a standard Gaussian. By construction such maps are increasing and their marginal distribution at any fixed point is a Gaussian.

In the setting we have constructed, both predictor and response distributions are supported on a single point (Dirac measures). We can thus conveniently represent them by identifying them with their singleton support. More precisely we represent each pair of predictor/response distributions $(\mu_i,\nu_i)$ via their support  $(X_i,Y_i)$. Therefore, we assume that we observe the collection $\{(X_i,Y_i)\}_{i=1}^N$, where $X_i \in [0,1]$ and are i.i.d. samples from distribution $p$, and $Y_i$ are i.i.d. samples from the distribution $N(T_0(X_i),\sigma^2)$, i.e. the marginal distribution of $Y_i$ given $X_i = x$ is $N(T_0(x),\sigma^2)$. The estimation of the true map $T_0$ in this setting is now equivalent to the estimation of an isotonic regression map from the sample pairs $\{(X_i,Y_i)\}_{i=1}^N$.

Note that any map $T$, induces a probability distribution $P_T(X,Y)\in \mathcal{P}(\mathbb{R}^2)$. Let $\mathbb{P}_T$ denote the following family of distributions on $\mathbb{R}^2$
$$\mathbb{P}_T= \{P_T, s.t. \; T\in \mathcal{T}\}.$$
We want to find an upper-bound for the $\epsilon$-covering number of $\mathbb{P}_T$ in the square root $KL$ divergence, denoted by $N_{KL}(\epsilon;\mathbb{P}_T)$. Since for all $T\in\mathcal{T}$ we can write $P_T(X,Y) =p(X)P_T(Y|X)$, we only need to control the $\epsilon$-covering number of the conditional distributions $P_T(Y|X)$.

The idea is to show $N_{KL}(\epsilon;\mathbb{P}_T)$ can be upper-bounded by the bracketing entropy of the set $\mathcal{T}$. First note that according  to  \cite[Thm 2.7.5]{van1996weak}, we have the following upper-bound for the bracketing entropy of the set  $\mathcal{T}$:
$$\log N_{[]}(\epsilon,\norm{.}_{L^2(Q)},\mathcal{F})\leq K\left(\frac{1}{\epsilon}\right).$$
Since $P_T(Y|X)$ is a Gaussian distribution, for any two maps $T_1$ and $T_2$, we can control
$$KL(P_{T_1}||P_{T_2})\leq \frac{1}{2}\norm{T_1-T_2}^2_{L^2(p)}=\frac{1}{2}\norm{T_1-T_2}^2_{L^2(Q)},$$
 Therefore we conclude that $\log N_{KL}(\epsilon;\mathbb{P}_T)$ is no larger than $\log N_{[]}(\mathcal{T},\epsilon, \norm{.}_{L^2(Q)})\leq \epsilon^{-1}$. 

Now we can take any $\delta$-packing on the set $\mathcal{T}$. We know $\log M(\mathcal{T},\delta, \norm{.}_{L^2(Q)}) \asymp \frac{1}{\delta}$, where $M(\mathcal{T},\delta, \norm{.}_{L^2(Q)})$ is the $\delta$-packing number of the set $\mathcal{T}$. Take $\Phi(\delta)=\delta$, then using Theorem \ref{fano's ineq} and Lemma \ref{KL-epsilon-bound} (Appendix) we can write
$$ \mathfrak{M}(\theta(\mathcal{P});\rho)\geq \frac{\delta}{2}\bigg(1-\frac{\log N_{[]}(\mathcal{T},\epsilon, \norm{.}_{L^2(p)})+\epsilon^2 +\log 2}{\log M(\mathcal{T},\delta, \norm{.}_p)}\bigg).$$
Finally choosing $\epsilon \asymp \delta \asymp N^{-1/3}$ yields the desired rate.

\end{proof}

\section{Appendix: Fano's Method}

In this section, we restate the Fano's method in the format that we use to prove the theorem \ref{lower-bound}, which is taken from \cite{wainwright2019high}.

Given a class of distributions $\mathcal{\mathbb{P}}$, we let $\theta$ denote a functional on the space $\mathcal{P}$ that is a mapping from a distribution $\mathbb{P}$ to a parameter $\theta(\mathbb{P})$ taking values on some space $\Omega$. Let $\rho: \Omega \times \Omega \to [0,\infty)$ be a given metric. Also let $\Phi:[0,\infty]\to [0,\infty)$ be an increasing function. Then we define the $\rho$-minimax risk for the estimation of $\theta$ as:
$$\mathfrak{M}(\theta(\mathcal{P});\Phi\circ \rho):=\inf_{\hat{\theta}}\sup_{\mathbb{P}\in\mathcal{P}} E\big[\Phi\big(\rho(\hat{\theta},\theta(\mathbb{P}))\big)\big].$$

The following theorem (proposition 15.2 \cite{wainwright2019high}) gives a lower bound on the minimax error.
\begin{theorem}{\label{fano}}(Generalized Fano's inequality)
   Let $\{\theta^1,\cdots,\theta^M\}$ be a $2\delta$-separated set in the $\rho$ semi-metric on $\Theta(\mathcal{P})$, and suppose that $J$ is uniformly distributed over the index set $\{1,\cdots,M\}$, and $(Z|J=j)\sim P_{\theta^j}$. Then for any increasing function $\Phi:[0,\infty]\to [0,\infty)$, the minimax risk is lower bounded as
   \begin{equation}\label{fano's ineq}
       \mathfrak{M}(\theta(\mathcal{P});\Phi\circ \rho)\geq \Phi(\delta) \bigg\{1-\frac{I(Z;J)+\log 2}{\log M}\bigg\},
   \end{equation}
where $I(Z;J)$ is the mutual information between $Z$ and $J$.
\end{theorem}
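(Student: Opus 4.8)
The plan is to establish \eqref{fano's ineq} through the standard reduction of minimax estimation to a multiway hypothesis-testing problem, after which the classical (discrete) Fano inequality supplies the information-theoretic bound. The whole argument rests on three ingredients: the $2\delta$-separation of the finite family $\{\theta^1,\dots,\theta^M\}$, the monotonicity of $\Phi$, and the triangle inequality for the semi-metric $\rho$.

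First I would bound the supremum from below by the average over the chosen family: for any measurable estimator $\hat{\theta}$,
$$\sup_{\mathbb{P}\in\mathcal{P}} E\big[\Phi\big(\rho(\hat{\theta},\theta(\mathbb{P}))\big)\big]\;\geq\;\frac{1}{M}\sum_{j=1}^{M} E_{\theta^j}\big[\Phi\big(\rho(\hat{\theta},\theta^j)\big)\big]\;=\;E\big[\Phi\big(\rho(\hat{\theta},\theta^J)\big)\big],$$
where on the right the expectation is taken over the joint law of $(Z,J)$ with $J$ uniform on $\{1,\dots,M\}$ and $(Z\mid J=j)\sim P_{\theta^j}$. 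The inequality is valid because each $P_{\theta^j}$ lies in $\mathcal{P}$ with $\theta(P_{\theta^j})=\theta^j$, so each term is dominated by the supremum.

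Next comes the reduction to testing, which is the conceptual heart of the argument. Given $\hat{\theta}$, define the decoder $\psi(Z):=\argmin_{k}\rho(\hat{\theta}(Z),\theta^k)$, breaking ties arbitrarily. If $\rho(\hat{\theta},\theta^J)<\delta$, then for every $k\neq J$ the triangle inequality together with the $2\delta$-separation gives $\rho(\hat{\theta},\theta^k)\geq \rho(\theta^k,\theta^J)-\rho(\hat{\theta},\theta^J)>2\delta-\delta=\delta>\rho(\hat{\theta},\theta^J)$, so $\theta^J$ is the strict minimizer and $\psi(Z)=J$. Contrapositively, $\psi(Z)\neq J$ forces $\rho(\hat{\theta},\theta^J)\geq\delta$, whence $\Phi(\rho(\hat{\theta},\theta^J))\geq\Phi(\delta)$ by monotonicity of $\Phi$. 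Restricting the nonnegative expectation to this event therefore yields
$$E\big[\Phi\big(\rho(\hat{\theta},\theta^J)\big)\big]\;\geq\;\Phi(\delta)\,P\big(\psi(Z)\neq J\big).$$

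Finally I would invoke the classical Fano inequality for the decoding chain $J\to Z\to\psi(Z)$. Since $J$ is uniform, $H(J)=\log M$, and the standard estimate $H(J\mid Z)\leq \log 2 + P(\psi(Z)\neq J)\log M$ rearranges, using $I(Z;J)=H(J)-H(J\mid Z)$, into $P(\psi(Z)\neq J)\geq 1-\frac{I(Z;J)+\log 2}{\log M}$. Substituting this into the previous display and taking the infimum over all estimators $\hat{\theta}$ — noting that $I(Z;J)$ depends only on the fixed joint law of $(Z,J)$ and not on $\hat{\theta}$ — delivers \eqref{fano's ineq}. I expect the only delicate point to be the testing-reduction step: it is where the hypothesis that the family is $2\delta$-separated (rather than merely $\delta$-separated) is genuinely used, and it silently relies on $\rho$ obeying the triangle inequality, so one must either read ``semi-metric'' as a pseudometric or else adjust the separation constant. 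The classical Fano inequality itself I would simply cite, or prove in one line from the entropy chain rule and the bound $h(p)\leq\log 2$ on the binary entropy.
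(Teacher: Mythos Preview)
Your argument is correct and is precisely the standard proof of this result. Note, however, that the paper does not supply its own proof of this theorem: it is merely restated in the Appendix as Proposition~15.2 of \cite{wainwright2019high}, to be invoked as a black box in the proof of Theorem~\ref{lower-bound}. Your three-step reduction (sup bounded below by a uniform average over the packing, the triangle-inequality decoding argument, and the classical discrete Fano bound) is exactly Wainwright's proof, so there is nothing to compare. Your closing caveat about the word ``semi-metric'' is well taken: the argument needs the triangle inequality for $\rho$, so one should read the hypothesis as ``pseudometric'' (as Wainwright does), not as a metric lacking the triangle inequality.
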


In order to find an upper-bound for the mutual information in the inequality \ref{fano's ineq}, we use lemma 15.5 of \cite{wainwright2019high} which we restate here:

\begin{lemma}(Yang-Barron method)
Let $N_{KL}(\epsilon;\mathcal{P})$ denote the $\epsilon$-covering number of $\mathcal{P}$ in the square-root KL divergence. Then the mutual information is upper bounded as

\end{lemma}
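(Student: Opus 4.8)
The plan is to establish the Yang--Barron bound $I(Z;J) \le \inf_{\epsilon>0}\{\log N_{KL}(\epsilon;\mathcal{P}) + \epsilon^2\}$ by combining the variational characterization of mutual information with a carefully chosen reference measure built from a square-root-KL cover of $\mathcal{P}$. Throughout, $J$ is uniform on $\{1,\dots,M\}$ and $Z\mid J=j\sim P_{\theta^j}$, as in Theorem \ref{fano}.

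First I would rewrite the mutual information as an average KL divergence to the marginal mixture. Since $J$ is uniform and $Z\mid J=j\sim P_{\theta^j}$, the marginal law of $Z$ is the mixture $\bar P := \frac{1}{M}\sum_{j=1}^M P_{\theta^j}$, and a direct computation gives $I(Z;J)=\frac{1}{M}\sum_{j=1}^M KL(P_{\theta^j}\,\Vert\,\bar P)$. The crucial elementary fact is that $\bar P$ minimizes the average KL divergence over all probability measures: for any distribution $Q$ one has the identity $\frac{1}{M}\sum_{j} KL(P_{\theta^j}\Vert Q)-\frac{1}{M}\sum_{j} KL(P_{\theta^j}\Vert\bar P)=KL(\bar P\Vert Q)\ge 0$. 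Consequently $I(Z;J)\le \frac{1}{M}\sum_{j=1}^M KL(P_{\theta^j}\Vert Q)$ for \emph{every} reference $Q$, which frees us to choose $Q$ as conveniently as possible.

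Next, fix $\epsilon>0$ and let $\{Q_1,\dots,Q_K\}$ with $K=N_{KL}(\epsilon;\mathcal{P})$ be an $\epsilon$-cover of $\mathcal{P}$ in the square-root KL divergence, so that for each $j$ there is an index $k(j)$ with $KL(P_{\theta^j}\Vert Q_{k(j)})\le \epsilon^2$. Taking $Q$ to be the uniform mixture $\bar Q := \frac{1}{K}\sum_{k=1}^K Q_k$ of the cover centers, I would invoke the pointwise domination $\bar Q \ge \frac{1}{K}Q_{k(j)}$, which yields $\log\frac{dP_{\theta^j}}{d\bar Q}\le \log K + \log\frac{dP_{\theta^j}}{dQ_{k(j)}}$ almost everywhere. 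Integrating against $P_{\theta^j}$ gives $KL(P_{\theta^j}\Vert\bar Q)\le \log K + KL(P_{\theta^j}\Vert Q_{k(j)})\le \log N_{KL}(\epsilon;\mathcal{P}) + \epsilon^2$, uniformly in $j$.

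Combining the two bounds, $I(Z;J)\le \frac{1}{M}\sum_{j} KL(P_{\theta^j}\Vert\bar Q)\le \log N_{KL}(\epsilon;\mathcal{P})+\epsilon^2$, and since $\epsilon>0$ was arbitrary, taking the infimum over $\epsilon$ delivers the claim. The only genuinely non-routine ingredient is the variational identity of the second paragraph, which trades the intractable marginal $\bar P$ for a freely chosen reference measure; once that is in place, the trade-off between the covering number (entering as $\log N_{KL}$) and the approximation radius (entering as $\epsilon^2$) falls out of the elementary mixture-domination inequality $\bar Q\ge K^{-1}Q_{k(j)}$. The main point requiring care is the absolute-continuity bookkeeping, ensuring that the Radon--Nikodym derivatives $\frac{dP_{\theta^j}}{d\bar Q}$ are well defined and the pointwise domination is legitimate; this is immediate because $\bar Q$ dominates each cover center by construction.
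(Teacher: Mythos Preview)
Your argument is correct and is precisely the standard Yang--Barron proof: the variational identity $\frac{1}{M}\sum_j KL(P_{\theta^j}\Vert Q)=I(Z;J)+KL(\bar P\Vert Q)$ followed by the mixture-domination bound with $Q$ taken to be the uniform mixture over an $\epsilon$-cover. The paper itself does not supply a proof of this lemma; it merely restates Lemma~15.5 of \cite{wainwright2019high} for reference, so there is no alternative argument to compare against.
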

\begin{equation}{\label{KL-epsilon-bound}}
    I(Z;J)\leq \inf_{\epsilon>0} \{\epsilon^2 +\log N_{KL}(\epsilon;\mathcal{P})\}.
\end{equation}

\bibliographystyle{imsart-nameyear}
\bibliography{paper}

\end{document}